\newtheorem{theorem}{Theorem}
\newtheorem{proposition}[theorem]{Proposition}
\newtheorem{corollary}[theorem]{Corollary}
\newtheorem{definition}[theorem]{Definition}
\title{Characterizing extremal graphs for open neighbourhood location-domination\footnote{\noindent The research of the first author was partially financed by the French government IDEX-ISITE initiative 16-IDEX-0001 (CAP 20-25). The research of the second author was in part supported by a grant from IPM (No. 1400050116).}}
\author{Florent Foucaud\footnote{\noindent LIMOS, CNRS UMR 6158, Universit\'e Clermont Auvergne, Aubi\`ere, France.}~\footnote{Univ. Orléans, INSA Centre Val de Loire, LIFO EA 4022, F-45067 Orléans Cedex 2, France.} \and Narges Ghareghani\footnote{\noindent Department of Industrial Design, College of  Fine Arts, University of Tehran, Tehran, Iran.}~\footnote{\noindent School of Mathematics, Institute for Research in Fundamental Sciences (IPM), P.O. Box: 19395‐5746, Tehran, Iran.} \and Aida Roshany-Tabrizi\footnote{\noindent Department of Computer Science, Augsburg University, Germany.} \and Pouyeh Sharifani\footnotemark[5]}
\begin{document}
\maketitle

\begin{abstract}
An open neighbourhood locating-dominating set is a set $S$ of vertices of a graph $G$ such that each vertex of $G$ has a neighbour in $S$, and for any two vertices $u,v$ of $G$, there is at least one vertex in $S$ that is a neighbour of exactly one of $u$ and $v$. We characterize those graphs whose only open neighbourhood locating-dominating set is the whole set of vertices. More precisely, we prove that these graphs are exactly the graphs for which all connected components are half-graphs (a half-graph is a special bipartite graph with both parts of the same size, where each part can be ordered so that the open neighbourhoods of consecutive vertices differ by exactly one vertex). This corrects a wrong characterization from the literature.
\end{abstract}

\section{Introduction}
We consider only simple, finite and undirected graphs. Many graph problems are motivated by the task of uniquely identifying the vertices using a small set of vertices. In one type of such problems, the identification is done by the neighbourhood within the solution set. More precisely, denoting $N(v)$ the open neighbourhood of a vertex $v$, an \emph{open neighbourhood locating-dominating set} (\emph{OLD set} for short) of a graph $G$ is a set $S$ of vertices of $G$ such that each vertex $v$ has a neighbour in $S$ (that is, $S$ is a \emph{total dominating set}) and for any two distinct vertices $v,w$ of $G$, $N(v)\cap S\neq N(w)\cap S$. In other words, each vertex $v$ in $G$ has a distinct and nonempty set $N(v)\cap S$ of neighbours within $S$. It is not difficult to see that a graph admits an OLD set if and only if it has no isolated vertices and contains no pair of \emph{open twins} (open twins are vertices with the same open neighbourhood). We call such graphs \emph{locatable}. The smallest size of an OLD set of a locatable graph $G$ is its open neighbourhood location-domination number, denoted by $\gamma_{OL}(G)$.

The notion of open neighbourhood location-domination was introduced in 2002 under the name of \emph{IDNT codes} by Honkala, Laihonen and Ranto in~\cite[Section 5]{IDNT} and re-discovered in 2010 by Seo and Slater in~\cite{SS10}. Various aspects of the problem were subsequently studied in~\cite{Chellali,interval-bounds,interval-algo,circulant,HY12,triangular,PP,OLDtrees}. OLD sets are related to the concept of identifying codes~\cite{KCL98} (where open neighbourhoods are replaced by closed neighbourhoods) and the earlier concept of locating-dominating sets~\cite{S87,S88} (where only vertices not in the solution set need to be uniquely identified). The variant of locating-total dominating sets is also studied~\cite{hhh06}. These concepts are special cases of more general questions of identification in discrete structures, which are studied in various settings since the 1960s~\cite{BS07,B72,MS85,R61}.

One of the first questions regarding this type of problems is, how large the optimum size of a solution (in terms of the number of vertices) can be, and what is the class of extremal examples reaching the bound. For example, this is the focus of~\cite{IDcodes-extremal} for identifying codes, and it is addressed in~\cite{Chellali} for OLD sets. In the latter paper, it is proved that there exist three connected graphs for which the OLD number is equal to the order. It is then claimed that these are the only connected graphs with this property. As we will see, this is not true. The three graphs from~\cite{Chellali} are the three smallest \emph{half-graphs}; half-graphs form an infinite family of bipartite graphs, studied and named by Erd\H{o}s and Hajnal (see~\cite{EH84}). See Figure~\ref{fig:half-graphs} for the first five half-graphs. Half-graphs, or rather their complements, have been useful in previous works about locating-dominating sets~\cite{Heia} and identifying codes~\cite{IDcodes-extremal}.

We show that each half-graph $H$ of order $n$ satisfies $\gamma_{OL}(H)=n$. Note that if a locatable graph is disconnected, its OLD number is the sum of those of its connected components, thus one obtains other examples by taking disjoint unions of half-graphs. Our main theorem proves that these are the only examples.

\begin{theorem}\label{thm:main}
For a connected locatable graph $G$ of order $n$, $\gamma_{OL}(G)=n$ if and only if $G$ is a half-graph. 
\end{theorem}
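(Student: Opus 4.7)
The plan is to handle the two implications separately. For the forward direction, I would argue directly by labelling the half-graph $H_k$ of order $2k$ with parts $\{a_1,\dots,a_k\}$ and $\{b_1,\dots,b_k\}$, where $N(a_i) = \{b_i,\dots,b_k\}$ and $N(b_j) = \{a_1,\dots,a_j\}$. Given any OLD set $S$ of $H_k$, distinguishing $a_i$ from $a_{i+1}$ (whose neighbourhoods differ only in $b_i$) forces $b_i \in S$ for all $i \le k-1$, and symmetrically $a_{j+1} \in S$ for all $j \le k-1$; moreover the leaf $b_1$ has only $a_1$ as a neighbour, so total domination forces $a_1 \in S$, and likewise $b_k \in S$. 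Combining these conditions, $S = V(H_k)$, establishing $\gamma_{OL}(H_k) = 2k$.

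For the converse, I would induct on $n$, the base case $n=2$ giving the only connected locatable graph $K_2 = H_1$. The central equivalence driving the induction is that for a vertex $v$ of a locatable graph $G$, the set $V(G) \setminus \{v\}$ is an OLD set if and only if (i) $v$ has no private leaf neighbour, and (ii) no pair $\{x,y\}$ of vertices satisfies $N(x) \triangle N(y) = \{v\}$. Hence $\gamma_{OL}(G) = n$ is equivalent to the statement that every vertex is either of type $L$ (has a private leaf) or of type $T$ (is the \emph{label} of a symmetric-difference-one pair).

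The key step is then to show that an extremal $G$ contains a leaf, and I view this as the main obstacle. Assuming $\delta(G) \ge 2$, so that every vertex is of type $T$, I would analyse the poset of open neighbourhoods on $V(G)$, which has $n$ pairwise distinct elements (since $G$ is locatable) and whose covering relations correspond to the symmetric-difference-one pairs. The plan is to derive a contradiction from the requirement that every vertex of $G$ appears as a label of at least one covering relation, exploiting the identity $\deg(u) = \deg(w)+1$ for every labelled pair $\{u,w\}$ together with the absence of self-loops and open twins.

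Once a leaf $\ell$ with unique neighbour $v$ is available, applying the equivalence above to $\ell$ (using $\deg(v) \ge 2$, which holds since $G$ is connected and $n \ge 3$) yields a vertex $w \neq v$ with $N(w) = N(v) \setminus \{\ell\}$. I would then study $G' = G - \{\ell, v\}$, identifying $\ell$, $v$, and $w$ with the rightmost triple $a_k$, $b_k$, $b_{k-1}$ of the putative half-graph. The remaining work is to verify, using extremality of $G$, that $G'$ is connected, locatable (that is, no further pair in $V(G) \setminus \{\ell,v\}$ has symmetric difference $\{v\}$, which would yield open twins in $G'$), and itself extremal with $\gamma_{OL}(G') = n-2$; the induction hypothesis then identifies $G' \cong H_{k-1}$, and a direct check of the adjacencies of $\ell$ and $v$ to $V(G')$ concludes $G \cong H_k$.
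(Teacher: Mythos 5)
Your overall strategy matches the paper's (show every vertex of a half-graph is forced; for the converse, find a leaf, delete it together with its support vertex, and induct), but two steps that you explicitly defer are exactly the places where the real work lies, and as written they are gaps.

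First, the existence of a leaf. You correctly identify this as the main obstacle, but you only propose a plan: analyse the poset of open neighbourhoods and derive a contradiction from every vertex being the label of a symmetric-difference-one pair. That contradiction is not obvious from the degree identity you cite, and you never produce it. The paper gets this step for free from Bondy's theorem: $n$ distinct subsets of an $n$-set always admit an element whose removal keeps them distinct, so at most $n-1$ vertices can be location-forced, hence some vertex is domination-forced and $G$ has a leaf. Your approach can be completed (choose one distinguishing pair per label, obtaining $n$ edges on the $n$ distinct neighbourhoods, find a cycle, and observe that the symmetric differences around a cycle telescope to $\emptyset$, so each label must occur an even number of times on the cycle while your chosen edges carry distinct labels), but that is precisely a proof of Bondy's theorem and needs to be written out; as it stands this step is missing.

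Second, the final identification is glossed over. After the induction gives $G'\cong H_{k-1}$, you ``identify $w$ with $b_{k-1}$ and do a direct check of adjacencies.'' But the induction hypothesis only gives an abstract isomorphism: you must prove that $w$ lands on an \emph{extreme} vertex of $H_{k-1}$ (one of its two domination-forced vertices), since $N_G(v)=N_{G'}(w)\cup\{\ell\}$, and if $w$ were an interior vertex of the half-graph then $N_{G'}(w)$ is not a full part and $G$ would not be $H_k$. The paper closes this by noting that $w$ must itself be forced in $G$, that it cannot be domination-forced there (all its neighbours are also neighbours of $v$), and that being location-forced produces a degree-two vertex adjacent exactly to $v$ and $w$, which becomes a leaf at $w$ in $G'$; hence $w$ is domination-forced in $G'$ and, up to the symmetry of the half-graph, is the extreme vertex. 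Nothing in your sketch supplies this. A smaller issue: your base case should also cover $n=3$ (the paper checks that $K_3$, the only connected locatable graph of order $3$, has $\gamma_{OL}=2$), since your induction step drops the order by $2$ and must bottom out correctly for odd $n$.
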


This characterization corrects the incomplete one from~\cite{Chellali}. It also reveals an interesting property of OLD sets: indeed, there are only few concepts known to us in the area of graph identification problems, for which there exist infinitely many finite connected undirected graphs which have their whole vertex set as unique solution. (Another concept with this feature is the one of self-locating dominating sets, see~\cite{solid}.)

\section{The characterization}

In a locatable graph $G$, some vertices have to belong to any OLD set: we call such vertices \emph{forced}. There are two types of forced vertices: those that are forced because of the domination condition, and those that are forced because of the location condition.

\begin{definition} Let $G$ be a locatable graph and let $v$ be a vertex of $G$.
Vertex $v$ is called \emph{domination-forced} if there exists a vertex $w$, such that $v$ is the unique neighbour of $w$. Vertex $v$ is called \emph{location-forced} if there exist two distinct vertices $x$ and $y$, such that $N(x)\ominus N(y)=\{v\}$ (where $\ominus$ denotes the set symmetric difference).
\end{definition}

We can observe the following.

\begin{proposition}\label{prop:all-forced}
 If there is a vertex $v$ in a locatable graph $G$ which is neither domination-forced nor location-forced, then $V(G)\setminus\{v\}$ is an OLD set of $G$.
\end{proposition}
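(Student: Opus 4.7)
The plan is to verify directly that $S := V(G)\setminus\{v\}$ satisfies the two defining properties of an OLD set, by unwinding the definitions and using the contrapositive of the two ``forced'' conditions. The key algebraic identity I will rely on is that intersecting with $S$ only removes the element $v$, so for every vertex $u$, $N(u)\cap S = N(u)\setminus\{v\}$, and for all vertices $u,w$, $(N(u)\cap S)\ominus(N(w)\cap S) = (N(u)\ominus N(w))\setminus\{v\}$.

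First I will check total domination. Fix any vertex $u$ and consider $N(u)\cap S=N(u)\setminus\{v\}$. If this set were empty, then $N(u)$ would be either $\emptyset$ or $\{v\}$. The first case is impossible since $G$ is locatable (hence has no isolated vertex), and the second would mean that $v$ is the unique neighbour of $u$, so $v$ is domination-forced, contradicting the hypothesis on $v$. Thus each vertex has a neighbour in $S$.

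Next I will check location. Let $u,w$ be two distinct vertices and compute $(N(u)\cap S)\ominus(N(w)\cap S) = (N(u)\ominus N(w))\setminus\{v\}$. If this were empty, then $N(u)\ominus N(w)\subseteq\{v\}$, and there are only two possibilities: $N(u)\ominus N(w)=\emptyset$, which would make $u$ and $w$ open twins and contradict locatability of $G$; or $N(u)\ominus N(w)=\{v\}$, which exactly says that $v$ is location-forced, again contradicting our hypothesis. Hence $N(u)\cap S\neq N(w)\cap S$.

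I do not anticipate a serious obstacle: the statement is essentially the contrapositive repackaging of the definitions of domination-forced and location-forced, so the whole argument amounts to checking the two OLD axioms and invoking each forcing notion exactly once. The only care needed is to remember that ``locatable'' rules out both isolated vertices and open twins, which are precisely the two degenerate cases that would otherwise arise in the two verifications.
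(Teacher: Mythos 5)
Your proof is correct and follows essentially the same route as the paper's: both arguments verify the two OLD axioms for $V(G)\setminus\{v\}$ directly, using the contrapositives of the two forcing definitions together with the fact that locatability excludes isolated vertices and open twins. Yours merely spells out the case analysis in more detail.
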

\begin{proof}
Since $v$ is not domination-forced, every vertex of $G$ has a neighbour in $V(G)\setminus\{v\}$. Moreover since $v$ is not location-forced, for every pair $z,w$ of distinct vertices in $G$, there is a vertex in $V(G)\setminus\{v\}$ in the symmetric difference $N(z)\ominus N(w)$, which therefore distinguishes $z$ and $w$.
\end{proof}

Proposition~\ref{prop:all-forced} implies that in any locatable graph $G$ of order $n$ with $\gamma_{OL}(G)=n$, every vertex is domination-forced or location-forced (or both).

We now formally define half-graphs, which were named in~\cite{EH84}.

\begin{definition}
For any integer $k\geq 1$, the half-graph $H_k$ is the bipartite graph on vertex sets $\{v_1,\ldots,v_k\}$ and $\{w_1,\ldots,w_k\}$, with an edge between $v_i$ and $w_j$ if and only if $i\leq j$.
\end{definition}

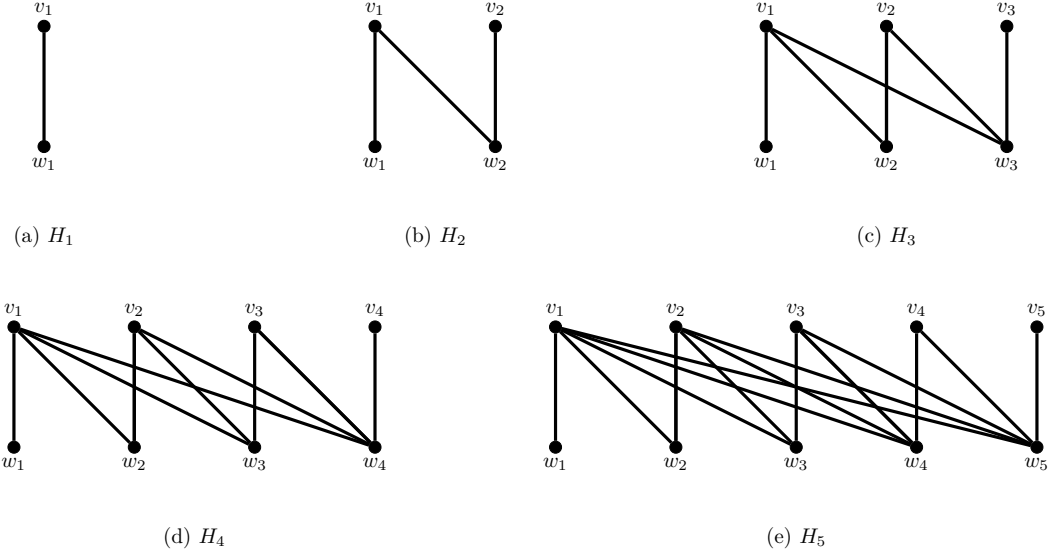
\begin{figure}[!htpb]
\centering
\scalebox{0.8}{
\begin{tikzpicture}[inner sep=2pt, minimum size=2mm]

\begin{scope}

  \node[circle, draw=black!100,fill=black!100,label={270:$w_1$}] (w1) at (0,0) {};
  \node[circle, draw=black!100,fill=black!100,label={90:$v_1$}] (v1) at (0,2) {};

  \draw[line width = 1.5pt] (w1) -- (v1);
  
    \node (title) at (0,-3/2) {(a) $H_1$};
  
\end{scope}

\begin{scope}[xshift=7.5cm]
  \node[circle, draw=black!100,fill=black!100,label={270:$w_1$}] (w1) at (-2,0) {};
  \node[circle, draw=black!100,fill=black!100,label={90:$v_1$}] (v1) at (-2,2) {};
  \node[circle, draw=black!100,fill=black!100,label={270:$w_2$}] (w2) at (0,0) {};
  \node[circle, draw=black!100,fill=black!100,label={90:$v_2$}] (v2) at (0,2) {};
  
  \draw[line width = 1.5pt] (w1) -- (v1);
  \draw[line width = 1.5pt] (v1) -- (w2);
  \draw[line width = 1.5pt] (w2) -- (v2);

    \node (title) at (-1,-3/2) {(b) $H_2$};
\end{scope}

\begin{scope}[xshift=14cm]
\node[circle, draw=black!100,fill=black!100,label={270:$w_1$}] (w1) at (-2,0) {};
  \node[circle, draw=black!100,fill=black!100,label={90:$v_1$}] (v1) at (-2,2) {};
  \node[circle, draw=black!100,fill=black!100,label={270:$w_2$}] (w2) at (0,0) {};
  \node[circle, draw=black!100,fill=black!100,label={90:$v_2$}] (v2) at (0,2) {};
  \node[circle, draw=black!100,fill=black!100,label={90:$v_3$}] (v3) at (2,2) {};
  \node[circle, draw=black!100,fill=black!100,label={270:$w_3$}] (w3) at (2,0) {};
  
  \draw[line width = 1.5pt] (w1) -- (v1);
  \draw[line width = 1.5pt] (v1) -- (w2);
  \draw[line width = 1.5pt] (w2) -- (v2);
  \draw[line width = 1.5pt] (v1) -- (w3);
  \draw[line width = 1.5pt] (v2) -- (w2);
  \draw[line width = 1.5pt] (w3) -- (v3);
  \draw[line width = 1.5pt] (v2) -- (w3);

    \node (title) at (0,-3/2) {(c) $H_3$};
\end{scope}

\begin{scope}[xshift=3.5cm,yshift=-5cm]
  \node[circle, draw=black!100,fill=black!100,label={270:$w_1$}] (w1) at (-4,0) {};
  \node[circle, draw=black!100,fill=black!100,label={90:$v_1$}] (v1) at (-4,2) {};
  \node[circle, draw=black!100,fill=black!100,label={270:$w_2$}] (w2) at (-2,0) {};
  \node[circle, draw=black!100,fill=black!100,label={90:$v_2$}] (v2) at (-2,2) {};
  \node[circle, draw=black!100,fill=black!100,label={90:$v_3$}] (v3) at (0,2) {};
  \node[circle, draw=black!100,fill=black!100,label={270:$w_3$}] (w3) at (0,0) {};
  \node[circle, draw=black!100,fill=black!100,label={90:$v_4$}] (v4) at (2,2) {};
  \node[circle, draw=black!100,fill=black!100,label={270:$w_4$}] (w4) at (2,0) {};
  
  \draw[line width = 1.5pt] (w1) -- (v1);
  \draw[line width = 1.5pt] (v1) -- (w2);
  \draw[line width = 1.5pt] (w2) -- (v2);
  \draw[line width = 1.5pt] (v1) -- (w3);
  \draw[line width = 1.5pt] (v2) -- (w2);
  \draw[line width = 1.5pt] (w3) -- (v3);
  \draw[line width = 1.5pt] (v2) -- (w3);
  \draw[line width = 1.5pt] (v3) -- (w4);
  \draw[line width = 1.5pt] (v1) -- (w4);
  \draw[line width = 1.5pt] (v2) -- (w4);
  \draw[line width = 1.5pt] (v3) -- (w4);
  \draw[line width = 1.5pt] (v4) -- (w4);

    \node (title) at (-1,-3/2) {(d) $H_4$};
\end{scope}

\begin{scope}[xshift=10.5cm,yshift=-5cm]
  \node[circle, draw=black!100,fill=black!100,label={270:$w_1$}] (w1) at (-2,0) {};
  \node[circle, draw=black!100,fill=black!100,label={90:$v_1$}] (v1) at (-2,2) {};
  \node[circle, draw=black!100,fill=black!100,label={270:$w_2$}] (w2) at (0,0) {};
  \node[circle, draw=black!100,fill=black!100,label={90:$v_2$}] (v2) at (0,2) {};
  \node[circle, draw=black!100,fill=black!100,label={90:$v_3$}] (v3) at (2,2) {};
  \node[circle, draw=black!100,fill=black!100,label={270:$w_3$}] (w3) at (2,0) {};
  \node[circle, draw=black!100,fill=black!100,label={90:$v_4$}] (v4) at (4,2) {};
  \node[circle, draw=black!100,fill=black!100,label={270:$w_4$}] (w4) at (4,0) {};
  \node[circle, draw=black!100,fill=black!100,label={90:$v_5$}] (v5) at (6,2) {};
  \node[circle, draw=black!100,fill=black!100,label={270:$w_5$}] (w5) at (6,0) {};
  
  \draw[line width = 1.5pt] (w1) -- (v1);
  \draw[line width = 1.5pt] (v1) -- (w2);
  \draw[line width = 1.5pt] (w2) -- (v2);
  \draw[line width = 1.5pt] (v1) -- (w3);
  \draw[line width = 1.5pt] (v2) -- (w2);
  \draw[line width = 1.5pt] (w3) -- (v3);
  \draw[line width = 1.5pt] (v2) -- (w3);
  \draw[line width = 1.5pt] (v3) -- (w4);
  \draw[line width = 1.5pt] (v1) -- (w4);
  \draw[line width = 1.5pt] (v2) -- (w4);
  \draw[line width = 1.5pt] (v3) -- (w4);
  \draw[line width = 1.5pt] (v4) -- (w4);
  \draw[line width = 1.5pt] (v1) -- (w5);
  \draw[line width = 1.5pt] (v2) -- (w5);
  \draw[line width = 1.5pt] (v3) -- (w5);
  \draw[line width = 1.5pt] (v4) -- (w5);
  \draw[line width = 1.5pt] (v5) -- (w5);

    \node (title) at (2,-3/2) {(e) $H_5$};
\end{scope}

\end{tikzpicture}}
\caption{The five first half-graphs.}\label{fig:half-graphs}
\end{figure}

We now show that every half-graph $H_k$ of order $n=2k$ satisfies $\gamma_{OL}(H_k)=n$.

\begin{proposition}\label{prop:half-graphs}
For every $k\geq 1$, the half-graph $H_k$ satisfies $\gamma_{OL}(H_k)=2k$.
\end{proposition}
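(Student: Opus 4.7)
The plan is to combine Proposition~\ref{prop:all-forced} with the explicit description of the neighbourhoods in $H_k$: I want to show that every vertex of $H_k$ is forced (either domination-forced or location-forced), since any forced vertex must belong to every OLD set. Together with the trivial upper bound $\gamma_{OL}(H_k)\le 2k$, this gives the result.

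Before invoking the forced-vertex framework I would briefly verify that $H_k$ is locatable, so that an OLD set exists at all. No vertex is isolated (e.g.\ $v_1$ is adjacent to every $w_j$ and $w_k$ is adjacent to every $v_i$), and the open neighbourhoods $N(v_i)=\{w_i,\ldots,w_k\}$ have pairwise distinct sizes $k-i+1$, as do the $N(w_j)=\{v_1,\ldots,v_j\}$; neighbourhoods of vertices from different parts lie in different parts of the bipartition, so no two vertices are open twins.

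The core of the argument is then a direct inspection. Since $N(v_k)=\{w_k\}$ and $N(w_1)=\{v_1\}$, the vertex $w_k$ is domination-forced (being the unique neighbour of $v_k$) and $v_1$ is domination-forced (being the unique neighbour of $w_1$). For the remaining vertices I compute the consecutive symmetric differences: for $1\le i\le k-1$,
\[ N(v_i)\ominus N(v_{i+1}) = \{w_i,\ldots,w_k\}\ominus\{w_{i+1},\ldots,w_k\} = \{w_i\}, \]
so every $w_i$ with $i<k$ is location-forced; and for $1\le j\le k-1$,
\[ N(w_j)\ominus N(w_{j+1}) = \{v_1,\ldots,v_j\}\ominus\{v_1,\ldots,v_{j+1}\} = \{v_{j+1}\}, \]
so every $v_i$ with $i\ge 2$ is location-forced. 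Every vertex of $H_k$ is thus forced, so it must belong to every OLD set, yielding $\gamma_{OL}(H_k)=2k$.

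I do not expect a real obstacle: the only thing to keep track of is that the two types of forced vertices together cover all $2k$ vertices, with $v_1$ and $w_k$ handled by the domination condition and the remaining $v_i$ and $w_i$ handled by consecutive symmetric differences within each part of the bipartition.
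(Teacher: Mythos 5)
Your proposal is correct and follows essentially the same route as the paper's proof: establish that $H_k$ is locatable, observe that $v_1$ and $w_k$ are domination-forced via the degree-one vertices $w_1$ and $v_k$, and show all remaining vertices are location-forced via symmetric differences of consecutive neighbourhoods (the paper handles the $v_i$'s by symmetry where you compute them explicitly, an immaterial difference). No gaps.
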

\begin{proof}
Let $k\geq 1$. It is clear that $H_k$ is locatable, thus $\gamma_{OL}(H_k)\leq 2k$. By Proposition~\ref{prop:all-forced}, it suffices to prove that every vertex of $H_k$ is forced.
Vertices $v_1$ and $w_k$ are domination-forced, since they are the only neighbours of $w_1$ and $v_k$, respectively.
For every integer $i$ with $1\leq i\leq {k-1}$, $w_i$ is the only vertex in the symmetric difference of $N(v_i)$ and $N(v_{i+1})$. Thus, $w_i$ is location-forced.
Since graph $H_k$ is symmetric, similarly $v_k,\ldots,v_2$ are location-forced. This completes the proof.
\end{proof}

Before proving our characterization, we will use the following celebrated theorem of Bondy to upper-bound the number of locating-forced vertices in any locatable graph.

\begin{theorem}[Bondy's theorem \cite{B72}]
Let $V$ be an $n$-set, and $\mathcal{A}=\{\mathcal{A}_1,\mathcal{A}_2,\ldots,\mathcal{A}_n\}$ be a family of $n$ distinct subsets of $V$. There is a $(n-1)$-subset $X$ of $V$ such that the sets $\mathcal{A}_1\cap X, \mathcal{A}_2\cap X, \mathcal{A}_3\cap X,\ldots, \mathcal{A}_n\cap X$ are still distinct.
\end{theorem}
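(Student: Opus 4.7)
The plan is to prove Bondy's theorem by producing a single element $v \in V$ whose deletion is harmless, and then taking $X = V \setminus \{v\}$. Since the $\mathcal{A}_i$ are already pairwise distinct, their restrictions $\mathcal{A}_i \cap X$ remain pairwise distinct if and only if no pair satisfies $\mathcal{A}_i \ominus \mathcal{A}_j = \{v\}$. So the whole task reduces to showing that some $v \in V$ is never the sole distinguisher of a pair from the family.

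I argue by contradiction: assume that for every $v \in V$ there exists an (unordered) pair $e_v = \{i,j\}$ with $\mathcal{A}_i \ominus \mathcal{A}_j = \{v\}$. A pair of sets determines its symmetric difference, so the map $v \mapsto e_v$ is injective; hence the collection $\{e_v : v \in V\}$ consists of $n$ distinct unordered pairs drawn from $\{\mathcal{A}_1,\ldots,\mathcal{A}_n\}$.

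Form the simple graph $G$ with vertex set $\{\mathcal{A}_1,\ldots,\mathcal{A}_n\}$ and edge set $\{e_v : v \in V\}$, where each edge $e_v$ carries the label $v$. Since $G$ has $n$ vertices and $n$ edges, it contains a cycle $\mathcal{A}_{i_1},\mathcal{A}_{i_2},\ldots,\mathcal{A}_{i_k},\mathcal{A}_{i_1}$ with $k\geq 3$, whose consecutive edge labels $v_1,\ldots,v_k$ are pairwise distinct by the injectivity above. On the other hand, telescoping symmetric differences around the cycle (using that $\ominus$ is associative and self-inverse) yields
\[
\emptyset = \mathcal{A}_{i_1}\ominus\mathcal{A}_{i_1} = (\mathcal{A}_{i_1}\ominus\mathcal{A}_{i_2}) \ominus (\mathcal{A}_{i_2}\ominus\mathcal{A}_{i_3}) \ominus \cdots \ominus (\mathcal{A}_{i_k}\ominus\mathcal{A}_{i_1}) = \{v_1\}\ominus\{v_2\}\ominus\cdots\ominus\{v_k\},
\]
so each element of $V$ must appear an even number of times in the multiset $\{v_1,\ldots,v_k\}$. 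This contradicts the fact that each $v_j$ appears exactly once, finishing the contradiction; hence some $v$ is removable and $X = V \setminus \{v\}$ works.

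The key difficulty, if any, is spotting the correct auxiliary object: the labelled pair-graph whose cycles are constrained by the abelian-group structure on $2^V$ under $\ominus$. Once this graph is set up, the fact that $n$ edges on $n$ vertices force a cycle, together with the parity constraint, closes the argument in one line; everything else (injectivity of $v\mapsto e_v$, and the translation between ``$v$ not a symmetric difference'' and ``$X=V\setminus\{v\}$ separates all pairs'') is routine bookkeeping.
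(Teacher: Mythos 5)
Your proof is correct, but there is nothing in the paper to compare it against: the paper states Bondy's theorem with a citation to~\cite{B72} and uses it as a black box (its only role is to yield Corollary~\ref{corollary 1.4.1}); no proof of it appears in the text. Your argument is one of the standard proofs of the result, and its steps all check out. The reduction to finding $v$ with $\mathcal{A}_i\ominus\mathcal{A}_j\neq\{v\}$ for every pair is valid precisely because the $\mathcal{A}_i$ are pairwise distinct, so a collision of restrictions to $X=V\setminus\{v\}$ forces the symmetric difference to be exactly $\{v\}$. The map $v\mapsto e_v$ is injective as you say, and since $\mathcal{A}_i\ominus\mathcal{A}_j=\{v\}\neq\emptyset$ each $e_v$ joins two distinct vertices, so the auxiliary graph is simple with $n$ vertices and $n$ edges and must contain a cycle of length $k\geq 3$. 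The labels along the cycle are pairwise distinct because each edge determines its label (the unique element of the symmetric difference of its endpoints), and the telescoping identity then gives the parity contradiction: a nonempty multiset of distinct singletons cannot $\ominus$-sum to $\emptyset$.

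Two remarks connecting your proof to the paper. First, your reduction step --- ``some $v\in V$ is never the sole distinguisher of a pair'' --- is exactly the paper's notion of an element that is not location-forced, so your proof makes the application transparent. Second, your cycle argument actually establishes slightly more than the stated theorem: run without the contradiction hypothesis, it shows that the set of edges $e_v$, taken over all $v$ that \emph{are} sole distinguishers, is acyclic, hence has at most $n-1$ elements; by injectivity at most $n-1$ elements of $V$ can be sole distinguishers. That is precisely Corollary~\ref{corollary 1.4.1}, obtained directly rather than via Bondy's theorem as a black box.
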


\begin{corollary}\label{corollary 1.4.1}
Every locatable graph $G$ of order $n$ has at most $n-1$ location-forced vertices.
\end{corollary}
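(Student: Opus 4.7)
The plan is to apply Bondy's theorem directly to the family of open neighbourhoods of $G$. Since $G$ is locatable, $G$ has no open twins, so the $n$ sets $\{N(v) : v \in V(G)\}$ form a family of $n$ pairwise distinct subsets of the $n$-set $V(G)$. Bondy's theorem therefore yields an $(n-1)$-subset $X \subseteq V(G)$ such that the restricted sets $N(v) \cap X$ remain pairwise distinct as $v$ ranges over $V(G)$.

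Let $u$ denote the unique vertex in $V(G) \setminus X$. The key step is to show that $u$ is not location-forced. Suppose for contradiction that it is: then there exist distinct vertices $x,y$ of $G$ with $N(x) \ominus N(y) = \{u\}$. Since $u \notin X$, this forces $N(x) \cap X = N(y) \cap X$, contradicting the distinguishing property of $X$ guaranteed by Bondy's theorem. Hence $u$ fails to be location-forced, so at least one vertex of $G$ is not location-forced, which yields the bound of $n-1$ on the number of location-forced vertices.

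There is no real obstacle here; the argument is essentially a translation of Bondy's theorem into the vocabulary of location-forced vertices, using the observation that ``$u$ is location-forced'' is precisely the statement that removing $u$ from the ground set merges some pair of neighbourhoods.
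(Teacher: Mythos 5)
Your proof is correct and is essentially the same argument as the paper's: apply Bondy's theorem to the family of (pairwise distinct, by locatability) open neighbourhoods and observe that the vertex excluded from the resulting $(n-1)$-set cannot be location-forced. You spell out the contradiction step slightly more explicitly than the paper does, but there is no substantive difference.
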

\begin{proof}
Construct from graph $G$ the set system with $V(G)$ as its $n$-set and where the $A_i$'s are all the open neighbourhoods of vertices of $G$. Bondy's theorem implies that there is one vertex such that removing it does not create two same open neighbourhoods. In other words, this vertex is not location-forced.
\end{proof}

We are now ready to prove Theorem~\ref{thm:main} that a connected locatable graph $G$ of order $n$ satisfies $\gamma_{OL}(G)=n$ if and only if $G$ is a half-graph. In the proof, we will say that a vertex $v$ of a graph $G$ is \emph{located} by a set $S$ if there is no vertex $w\neq v$ with $N(v)\cap S=N(w)\cap S$. Also, $v$ is \emph{total dominated} by $S$ if $v$ has a neighbour in $S$.

\begin{proof}[Proof of Theorem~\ref{thm:main}]
The sufficient side is proved in Proposition \ref{prop:half-graphs}. We prove the necessary side by induction on $n$. Let $G$ be a connected locatable graph of order $n$ with $\gamma_{OL}(G)=n$. We first prove the statement for $n\leq 3$. The graph of order~$1$ is not locatable. The only locatable connected graph of order~$2$ is $K_2$ (which is also the half-graph $H_1$), and $\gamma_{OL}(K_2)=2$. The only locatable connected graph of order~$3$ is $K_3$, and $\gamma_{OL}(K_3)=2$.

Suppose $G$ is a connected locatable graph of order $n$ such that $n\geq 4$ and $\gamma_{OL}(G)=n$. By Proposition~\ref{prop:all-forced}, every vertex in $G$ is either domination-forced or location-forced. By Corollary~\ref{corollary 1.4.1}, there is at least one vertex that is domination-forced, let us call it $x$. Let $y$ be a vertex such that $x$ is the unique neighbour of $y$. We claim that vertex $y$ is location-forced. If $y$ was domination-forced, then there would exist a vertex that is dominated only by $y$. That vertex should be equal to $x$, and then $G$ would be $K_2$, a contradiction. Therefore, $y$ is location-forced and so, there exists a vertex $z$, such that $N(x)=N(z)\cup\{y\}$.

Now we remove $x$ and $y$ from $G$ and call the new graph $G'$. We claim that $G'$ is locatable and connected. Since every vertex (other than $y$) that is a neighbour of $x$ is a neighbour of $z$, and $y$ has no neighbours in $V(G')$, $G'$ remains connected. This implies that $G'$ has no isolated vertices (since $n\geq 4$). To see that $G'$ is locatable, assume by contradiction that there are two vertices $s$ and $t$ that are open twins in $G'$. Thus, in $G$, without loss of generality, $N(s)=N(t)\cup\{x\}$. But recall that $N(x)=N(z)\cup\{y\}$. This means that $z$ is a neighbour of $s$, and not a neighbour of $t$. So $s$ and $t$ are not open twins in $G'$, a contradiction. Thus, $G'$ is locatable.

We now claim that $\gamma_{OL}(G')=n-2$. By contradiction, suppose $\gamma_{OL}(G')\leq n-3$. Let $S'$ be an OLD set of $G'$, with $|S'|\leq n-3$. We claim that $S=S'\cup \{x,y\}$ is an OLD set of $G$. First of all, $S$ is total dominating, indeed, every vertex in $G'$ has a neighbour in $S'$, and $x$ and $y$ are total dominated by each other. Secondly, we have to check that $S$ locates all the vertices. Vertex $x$ is located by $S$, since it is the only vertex that has $y$ as its neighbour.
In addition, we claim that $y$ is located by $S$. Indeed, if it was not, there would be a vertex $w$ such that $N(w)\cap S=\{x\}$. Thus $N(w)\cap S'=\emptyset$. Therefore, $w$ is not total dominated by $S'$ in $G'$, a contradiction. So $y$ is located by $S$. In addition, all the vertices of $G'$ are still located because they were located in $G'$ by $S'$, and they remain located in $G$ by the vertices in $S'$. As a result, $S$ is an OLD set of $G$. Thus, if $|S'|\leq n-3$, then $|S|\leq n-1$, which is a contradiction. Therefore, $\gamma_{OL}(G')=n-2$.
By induction, we conclude that $G'$ is isomorphic to the half-graph $H_k$ with $k=\frac{n}{2}-1$ and vertex set $\{v_1,v_2,...,v_k\}\cup \{w_1,w_2,...,w_k\}$.

Now we claim that vertex $z$ is domination-forced in $G'$. 
Note that vertex $z$ is location-forced in $G$ (indeed $z$ cannot be domination-forced in $G$, since all its neighbours are also neighbours of $x$), which means that there exist vertices $v$ and $w$ in $G$ such that $N(w)=N(v)\cup\{z\}$. Vertex $v$ cannot be in the common neighbourhood of $x$ and $z$, and since $N(x)=N(z)\cup\{y\}$, we have $v=y$. Hence, $w$ has degree~$2$ in $G$ and $N(w)=\{x,z\}$. Therefore, by removing $x$ and $y$ from $G$ to obtain $G'$, $z$ is the only neighbour of $w$ in $G'$. 
Thus, $z$ is domination-forced in $G'$, as claimed.

As we know, in $H_k$ there are two domination-forced vertices: $v_1$ and $w_k$. By the symmetry of $H_k$, without loss of generality, we can assume that $z=w_k$. As a result, it follows from the fact that $N(x)=N(z)\cup\{y\}$, that we can label $x=w_{k+1}$ and $y=v_{k+1}$ and that $G$ is isomorphic to the half-graph $H_{k+1}$. This completes the proof.
\end{proof}

\end{document}